\def \le {\leqslant}
\def \ge {\geqslant}
\def \PP {\mathbf P}
\def \NN {\mathbb N}
\def \RR {\mathbb R}
\def \ZZ {\mathbb Z}
\def \eps {\varepsilon}
\def \kappa {\varkappa}
\def \mf {\mathfrak}
\def \mc {\mathcal}
\def \ol {\overline}
\theoremstyle{plain}
\newtheorem{theorem}{Theorem}
\newtheorem{lem}{Lemma}
\newtheorem{cor}{Corollary}
\newtheorem{cor1}{Corollary}
\newtheorem{prop}{Proposition}
\begin{document}

\title{On distribution of fractional parts of linear forms}
\author{I.~Rochev}
%\thanks{Работа выполнена при частичной поддержке ...}
\date{}
%\address{%
%\hbox to70mm{\vbox{\hsize=70mm%
%\leftline{Кафедра теории чисел}
%\leftline{Механико-математический факультет}
%\leftline{Московский государственный университет им.~М.\,В.~Ломоносова}
%\leftline{Воробьевы горы, ГСП-2}
%\leftline{119992 Москва}
%}}}

\maketitle

\section{Introduction}

In 1924, Khintchine proved (published in 1926, see~\cite[Hilfssatz
III]{H}) that, given an increasing sequence of positive integers
$\{q_n\}_{n=1}^\infty$, satisfying
$$\frac{q_{n+t}}{q_n}\ge2\qquad(n=1,2,\ldots)$$
for some $t\in\NN$, there exists a real number $\alpha$ such that
for all $n\in\NN$,
$$\|q_n\alpha\|>\gamma,$$
where $\gamma>0$ depends only on $t$. Here $\|x\|$ denotes the
distance from a real number $x$ to the nearest integer,
$\|x\|=\min\limits_{n\in\ZZ}|x-n|$.

Khintchine does not compute $\gamma$ but from his proof it is
clear that one can take
$$\gamma=\frac{c}{\bigl(t\ln(t+1)\bigr)^2}$$
with some absolute constant $c>0$.

The further history of the problem can be found, for instance,
in~\cite{M},\cite{M1}. Here we just mention the work~\cite{PS},
where a special variant of the Lov\'asz local lemma (see
Lemma~\ref{l1} below) is used to prove that one can take
$$\gamma=\frac{c}{t\ln(t+1)},$$
where $c>0$ is some absolute constant.

Similar results can be proved about the distribution of fractional
parts of linear forms. Thus, in~\cite[Chapter~V, Lemma~2]{K} the
following statement is demonstrated.

\textit{Let $\vec u_r=(u_{r1},\ldots,u_{rn})$, $r\in\NN$, be a
sequence of integer vectors, $\vec u_r\ne\vec0$. Assume that their
(Euclidean) norms
$$\rho_r=\left(u_{r1}^2+\ldots+u_{rn}^2\right)^{1/2}$$
satisfy
$$\rho_{r+1}\ge k\rho_r\qquad(r=1,2,\ldots)$$
for some $k>2$. Then there exists a vector
$\vec\alpha=(\alpha_1,\ldots,\alpha_n)\in\RR^n$, such that for all
$r\in\NN$,
$$\|\vec u_r\cdot\vec\alpha\|=\|u_{r1}\alpha_1+\ldots+u_{rn}\alpha_n\|\ge\frac12\left(1-\frac1{k-1}\right).$$}

In the present paper we use arguments from~\cite{PS}, as well as
from~\cite{M}, to obtain generalizations of the above-mentioned
result of Peres--Schlag and some results of the
work~\cite{M},\cite{M1} in the case of linear forms.
Section~\ref{sec1} contains some auxiliary results. In
Section~\ref{sec2} we introduce some notation and prove some
technical assertions, expounding the ideas of methods of
Peres--Schlag and Moshchevitin. Finally, in Section~\ref{sec3} we
apply these results to certain examples.

\section{Auxiliary assertions}\label{sec1}

\begin{lem}\label{l1}Let $\{A_n\}_{n=1}^N$ be events in a
probabilistic space $(\Omega,\mathcal F,\PP)$, and let
$\{x_n\}_{n=1}^N$ be a collection of numbers from $[0;1]$. Denote
$B_0=\Omega$, $B_n=\bigcap\limits_{m=1}^nA_m^c$ ($1\le n\le N$),
where $A_m^c=\Omega\setminus A_m$. Suppose that for every
$n\in\{1,\ldots,N\}$ there exists $m=m(n)\in\{0,1,\ldots,n-1\}$
such that
\begin{equation}\label{1}
\PP(A_n\cap B_m)\le x_n\prod_{m<k<n}(1-x_k)\cdot\PP(B_m)
\end{equation}
(if $m=n-1$, then $\prod\limits_{m<k<n}(1-x_k)=1$). Then for every
$1\le n\le N$,
\begin{equation}\label{2}
\PP(B_n)\ge(1-x_n)\PP(B_{n-1}).
\end{equation}
\end{lem}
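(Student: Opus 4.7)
The plan is to prove \eqref{2} by induction on $n$, which is the standard strategy for Lov\'asz-type statements of this form. The base case $n=1$ is immediate: the only admissible choice is $m(1)=0$, so the empty product equals $1$ and \eqref{1} reads $\PP(A_1)\le x_1\PP(B_0)=x_1$, whence $\PP(B_1)=\PP(B_0)-\PP(A_1)\ge(1-x_1)\PP(B_0)$.

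For the inductive step, suppose \eqref{2} holds for all indices strictly less than $n$, and fix the index $m=m(n)\in\{0,1,\ldots,n-1\}$ supplied by the hypothesis. First I would write the elementary identity
\[
\PP(B_n)=\PP(B_{n-1})-\PP(A_n\cap B_{n-1}),
\]
which follows from $B_n=B_{n-1}\cap A_n^c$. The goal is therefore to show that $\PP(A_n\cap B_{n-1})\le x_n\PP(B_{n-1})$.

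The key observation is the monotonicity $B_{n-1}\subseteq B_m$ (since the $B_k$ form a decreasing sequence), which gives $A_n\cap B_{n-1}\subseteq A_n\cap B_m$ and hence, by \eqref{1},
\[
\PP(A_n\cap B_{n-1})\le\PP(A_n\cap B_m)\le x_n\prod_{m<k<n}(1-x_k)\cdot\PP(B_m).
\]
To close the loop, I would chain the inductive hypothesis from $k=m+1$ up to $k=n-1$:
\[
\PP(B_{n-1})\ge(1-x_{n-1})\PP(B_{n-2})\ge\ldots\ge\prod_{m<k<n}(1-x_k)\cdot\PP(B_m),
\]
so the product $\prod_{m<k<n}(1-x_k)\cdot\PP(B_m)$ is bounded above by $\PP(B_{n-1})$. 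Combining these two inequalities yields $\PP(A_n\cap B_{n-1})\le x_n\PP(B_{n-1})$, and substitution into the displayed identity gives exactly \eqref{2}.

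There is no real obstacle here: the only subtlety is arranging the induction so that the factor $\prod_{m<k<n}(1-x_k)$ produced by the hypothesis is absorbed precisely by iterating the inductive bound from $B_m$ up to $B_{n-1}$. The argument works for arbitrary choices of $m(n)$ because the telescoping of the $(1-x_k)$ factors matches the range $m<k<n$ of the product in \eqref{1}.
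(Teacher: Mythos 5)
Your proof is correct and follows exactly the same route as the paper: induction on $n$, the inclusion $A_n\cap B_{n-1}\subseteq A_n\cap B_m$ combined with \eqref{1}, and the telescoped inductive bound $\prod_{m<k<n}(1-x_k)\cdot\PP(B_m)\le\PP(B_{n-1})$ to absorb the product. No issues.
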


\begin{proof}

We use induction on $n$.

\underline{Base} \underline{of} \underline{induction}.  One has
$$\PP(B_1)=1-\PP(A_1)\ge1-x_1=(1-x_1)\PP(B_0).$$

\underline{\mathstrut Inductive} \underline{\mathstrut step}.
Assume that (\ref{2}) is verified for $1\le n<n_0$. Using it
inductively for $n=n_0-1,n_0-2,\ldots,m+1$ (where $m=m(n_0)$), one
gets
$$\prod_{m<k<n_0}(1-x_k)\cdot\PP(B_m)\le\PP(B_{n_0-1}).$$
In view of (\ref{1}), one has
$$\PP(A_{n_0}\cap B_{n_0-1})\le\PP(A_{n_0}\cap B_m)\le x_{n_0}\PP(B_{n_0-1}),$$
hence
$$\PP(B_{n_0})=\PP(B_{n_0-1})-\PP(A_{n_0}\cap B_{n_0-1})\ge(1-x_{n_0})\PP(B_{n_0-1}).$$
Thus, (\ref{2}) holds for $n=n_0$.\end{proof}

Let $d\in\NN$, $\vec a=(a_1,\ldots,a_d)\in\RR^d$, $b\in\RR$,
$\eps>0$. Consider
$$E=E(d,\vec a,b,\eps)=\{\vec\theta\in[0;1]^d:\|\vec{ a}\cdot\vec\theta+b\|\le\eps\},$$
$V=V(d,\vec a,b,\eps)=\mu E$, where $\mu$ is the $d$-dimensional
Lebesgue measure. For $p\in[1;\infty]$, set
$$R=|\vec a|_p=\begin{cases}\left(\sum\limits_{n=1}^d|a_n|^p\right)^{1/p},&p\in[1;\infty);\\
\max\limits_{1\le n\le d}|a_n|,&p=\infty. \end{cases}$$

\begin{lem}\label{l2}
If $R>0$ then $V\le2\eps\left(1+\dfrac{d^{1/p}}R\right)$, where
$d^{1/p}=1$ for $p=\infty$.
\end{lem}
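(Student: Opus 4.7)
The plan is to reduce the statement to the case $p=\infty$ and then carry out a one-dimensional estimate via Fubini. For the reduction, the standard comparison of power means
$$|\vec a|_\infty=\max_i|a_i|\ge\left(\frac1d\sum_i|a_i|^p\right)^{1/p}=\frac{|\vec a|_p}{d^{1/p}}$$
gives $1/|\vec a|_\infty\le d^{1/p}/R$, so it is enough to prove the sharper bound $V\le 2\eps\bigl(1+1/|\vec a|_\infty\bigr)$.

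To establish this, I would pick an index $j$ with $|a_j|=|\vec a|_\infty$ and integrate first over $\theta_j$. For each fixing of the remaining coordinates, writing $c=b+\sum_{i\ne j}a_i\theta_i$, the $\theta_j$-section of $E$ equals $\{\theta_j\in[0,1]:\|a_j\theta_j+c\|\le\eps\}$. The change of variable $y=a_j\theta_j+c$ (Jacobian $|a_j|$) turns this into $|a_j|^{-1}\mu_1(I\cap S_\eps)$, where $I\subset\RR$ is an interval of length $|a_j|$ and $S_\eps=\{y\in\RR:\|y\|\le\eps\}$.

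The substantive step is then the one-dimensional inequality $\mu_1(I\cap S_\eps)\le 2\eps(L+1)$ for an arbitrary interval $I$ of length $L$. When $\eps\ge 1/2$ one has $S_\eps=\RR$ and the bound is immediate. For $\eps<1/2$ the set $S_\eps$ is $1$-periodic, so by a short periodicity argument $\mu_1(S_\eps\cap[y,y+1))=2\eps$ for every $y\in\RR$; splitting $I$ into $\lfloor L\rfloor$ translates of a unit interval plus a remainder of length $<1$, each contributing at most $2\eps$, yields the claim. Substituting back and integrating over the remaining $d-1$ variables gives $V\le 2\eps\bigl(1+1/|a_j|\bigr)$, and combined with the first paragraph this proves the lemma. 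The only non-routine point is the $1$-periodicity argument for the 1D bound, and it is elementary.
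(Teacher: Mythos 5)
Your proof is correct and follows essentially the same route as the paper: both single out the coordinate with the largest $|a_i|$, apply Fubini in that variable, and reduce to the one-dimensional estimate, which in the paper is phrased as covering $[0;1]$ by $\lceil R\rceil$ segments of length $1/R$ (each carrying measure exactly $2\eps/R$) and in your write-up as the equivalent periodicity bound $\mu_1(I\cap S_\eps)\le2\eps(L+1)$ after the change of variable. Your explicit use of the power-mean inequality to pass from $|\vec a|_\infty$ to $d^{1/p}/R$ is exactly the step $|a_1|\ge R/d^{1/p}$ in the paper.
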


\begin{proof} If $\eps>1/2$ then the statement is trivial. Assume
that $\eps\le1/2$. Consider the cases $d=1$ and $d>1$ separately.

\underline{$d=1$}. It is easy to see that for any segment
$I\subset\RR$ of length $1/R$,
$$\mu\{\theta\in I : \|a\theta+b\|\le\eps\}=2\eps/R.$$
Since the segment $[0;1]$ can be covered by $\lceil R\rceil$
segments of length $1/R$, then
$$V\le2\eps/R\cdot\lceil R\rceil<2\eps(1+1/R).$$

\underline{$d>1$}. Without loss of generality we may assume that
$|a_1|=\max\limits_{1\le n\le d}|a_n|$, hence, $|a_1|\ge
R/d^{1/p}$. Using Fubini's theorem we get
$$V=\int\limits_{[0;1]^d}\chi_E(\vec\theta)\,d\mu=\int\limits_{[0;1]^{d-1}}\int\limits_0^1\chi_E(\vec\theta)\,d\theta_1\,d\mu',$$
where $\chi_E$ is the characteristic function of $E$, $\mu'$ is
the $(d-1)$-dimensional Lebesgue measure on variables
$\theta_2,\ldots,\theta_d$. Using the considered case one gets
$$\int\limits_0^1\chi_E(\vec\theta)\,d\theta_1=V\biggl(1,a_1,\sum\limits_{n=2}^{d}a_n\theta_n+b,\eps\biggr)\le2\eps\left(1+\frac1{|a_1|}\right)\le2\eps\left(1+\dfrac{d^{1/p}}R\right),$$
and the statement follows immediately .
\end{proof}

\begin{cor1}\label{c1}
Let $I=[v_1;v_1+r]\times\ldots\times[v_d;v_d+r]\subset\RR^d$ be
any cube with side $r>0$. Then
$$\frac{\mu\{\vec\theta\in I: \|\vec{ a}\cdot\vec\theta+b\|\le\eps\}}{\mu (I)}\le2\eps\left(1+\dfrac{d^{1/p}}{Rr}\right).$$
\end{cor1}

\begin{proof}
The statement follows from Lemma~\ref{l2} if one uses the linear
change of coordinates $\vec\theta=\vec v+r\vec\vartheta$,
$\vec\vartheta\in[0;1]^d$.
\end{proof}

\section{General results}\label{sec2}

Given $d\in\NN$ and sequences $\vec a_n\in\RR^d$, $b_n\in\RR$
($n\in\NN$), denote
$$L_n(\vec\theta)=L_n(\theta_1,\ldots,\theta_d)=\vec
a_n\cdot\vec\theta+b_n.$$ Fix $p\in[1;\infty]$. Assume that
$R_n=|\vec a_n|_p$ satisfy $$0<R_1\le R_2\le\ldots$$ We keep this
notation for the rest of the paper.

Suppose we also have a non-increasing sequence of positive numbers
$\delta_1\ge\delta_2\ge\ldots>0$. Consider the sets
$$\mathfrak G_1=\{\vec\theta\in\RR^d : \forall n\in\NN\quad\|L_n(\vec\theta)\|\ge\delta_n\};$$
$$\mathfrak G_2=\biggl\{\vec\theta\in\RR^d : \liminf\limits_{n\to\infty}\frac{\|L_n(\vec\theta)\|}{\delta_n}\ge1\biggr\}.$$

\begin{prop}\label{pr1}
Let $\lambda\in\RR$, $x_n\in(0;1)$ ($n\in\NN$). Suppose that for
every $n\in\NN$ there is
    $m=m(n)\in\{0,1,\ldots,n-1\}$ such that the following conditions hold:
    \begin{enumerate}
        \item\label{cond1} If $m>0$, then $R_n/R_m\ge2^{2\lambda+1}d/\delta_m$;
        \item\label{cond2} $2(1+2^{-\lambda})^2\delta_n\le x_n\prod\limits_{m<k<n}(1-x_k)$.
    \end{enumerate}
Then the set $\mathfrak G_1$ is non-empty. Moreover, if
$\lim\limits_{n\to\infty}R_n=\infty$, then the set $\mathfrak G_2$
is everywhere dense.
\end{prop}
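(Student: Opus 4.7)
The plan is to apply Lemma~\ref{l1} to the events
$$A_n=\{\vec\theta\in I_0:\|L_n(\vec\theta)\|<\delta_n\}$$
in a suitable cube $I_0\subset\RR^d$, taking $\PP=\mu/\mu(I_0)$. Once hypothesis~(\ref{1}) of Lemma~\ref{l1} has been verified with the given $m(n)$ and $x_n$, conclusion~(\ref{2}) gives $\PP(B_n)\ge\prod_{k=1}^n(1-x_k)>0$ for every $n$; continuity of measure then forces $\bigcap_n B_n=\mathfrak G_1\cap I_0$ to have positive measure, so $\mathfrak G_1$ is non-empty.

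The crux is thus to establish
$$\PP(A_n\cap B_m)\le2(1+2^{-\lambda})^2\delta_n\,\PP(B_m)$$
for $m=m(n)$, since condition~(\ref{cond2}) then converts this into~(\ref{1}). The strategy is to slice $I_0$ by an axis-aligned grid of cubes of side $r=\delta_m/R_m$ when $m>0$ (with $I_0$ chosen sufficiently large to handle the case $m=0$ via Lemma~\ref{l2} directly). By condition~(\ref{cond1}),
$$R_nr=\frac{R_n}{R_m}\delta_m\ge2^{2\lambda+1}d\ge2^{2\lambda+1}d^{1/p},$$
so Corollary~\ref{c1} gives, on every grid cube $I$,
$$\frac{\mu(A_n\cap I)}{\mu(I)}\le2\delta_n\bigl(1+2^{-2\lambda-1}\bigr)\le2\delta_n(1+2^{-\lambda})^2.$$

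The main obstacle is promoting this uniform per-cube density bound to the global conditional bound on $\mu(A_n\cap B_m)/\mu(B_m)$, since $B_m$ is not itself a union of grid cubes. I expect to resolve this by decomposing $B_m$ according to its intersection with the grid, and using that boundary effects are absorbed into the slack between $1+2^{-2\lambda-1}$ and $(1+2^{-\lambda})^2$. The geometry is favourable because for each $k\le m$, the forbidden slab $\|L_k(\vec\theta)\|<\delta_k$ has normal thickness $2\delta_k/R_k\ge2\delta_m/R_m=2r$, so a single grid cube is smaller than any relevant slab and its interaction with $B_m$ can be controlled.

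Finally, for density of $\mathfrak G_2$ when $R_n\to\infty$: given any open cube $I\subset\RR^d$ and $\eps\in(0,1)$, run the same argument with $I_0=I$ and with $\delta_n$ replaced by $(1-\eps)\delta_n$, starting indices from the smallest $n_0$ for which conditions~(\ref{cond1})--(\ref{cond2}) remain valid relative to $I$; such an $n_0$ exists because $R_n\to\infty$. This produces $\vec\theta\in I$ with $\|L_n(\vec\theta)\|\ge(1-\eps)\delta_n$ for all $n\ge n_0$, and a diagonal argument in $\eps$ then yields a point of $\mathfrak G_2$ in every cube.
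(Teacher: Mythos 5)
The central step of your argument is not established, and the missing idea is precisely the one the paper's proof is built around. You correctly isolate the difficulty — promoting the per-cube bound of Corollary~\ref{c1} to the conditional bound $\mu(A_n\cap B_m)\le2(1+2^{-\lambda})^2\delta_n\,\mu(B_m)$ — but your proposed resolution does not work. Summing the per-cube estimate over a grid of cubes of side $r=\delta_m/R_m$ only gives $\mu(A_n\cap B_m)\le\mu(A_n)\le c\,\delta_n\,\mu(I_0)$, a bound relative to $\mu(I_0)$ rather than to $\mu(B_m)$. To get the conditional bound you would need, on each grid cube $I$, an estimate of the form $\mu(A_n\cap B_m\cap I)\le c\,\delta_n\,\mu(B_m\cap I)$, and with $A_n=E_n$ this is false: $B_m$ is an intersection of complements of slabs, and two forbidden slabs coming from different $E_k$, $k\le m$, can leave between them a surviving sliver whose measure inside $I$ is an arbitrarily small fraction of $\mu(I)$; that sliver can then lie entirely inside $E_n$. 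The observation that each slab has normal thickness at least $2r$ does not prevent this, and the multiplicative slack between $1+2^{-2\lambda-1}$ and $(1+2^{-\lambda})^2$ cannot absorb a ratio $\mu(B_m\cap I)/\mu(I)$ that may be arbitrarily close to $0$. The paper's fix is to \emph{fatten} the bad sets: $A_n$ is defined as the union of all dyadic cubes of side $2^{-l_n}$, $l_n=\bigl\lceil\log_2(d^{1/q}R_n/\delta_n)+\lambda\bigr\rceil$, that meet $E_n$ (this only weakens the conclusion on $A_n$ to $\|L_n(\vec\theta)\|<(1+2^{-\lambda})\delta_n$, which is where one factor $(1+2^{-\lambda})$ is spent). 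Since the scales $l_n$ are nested, $B_m$ is then \emph{exactly} a disjoint union of dyadic cubes of side $2^{-l_m}$, and Corollary~\ref{c1} applies on each such cube with no boundary terms at all. Without some version of this fattening your argument has a genuine gap at its crux.

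Two smaller problems. First, $\mu(B_n)\ge\prod_{k=1}^n(1-x_k)>0$ for every $n$ does not imply $\mu\bigl(\bigcap_nB_n\bigr)>0$ by continuity of measure: the limit is $\prod_{k=1}^\infty(1-x_k)$, which is $0$ whenever $\sum x_k=\infty$ (as in all the paper's applications). Non-emptiness of the intersection must instead come from compactness of a nested sequence of non-empty closed sets; the paper passes to the closed sets $F_n=\bigcap_{k=1}^nE_k^c\supset B_n$ for exactly this reason. Second, the $(1-\eps)$-shrinking of $\delta_n$ in the density argument is both unnecessary — obtaining $\|L_n(\vec\theta)\|\ge\delta_n$ for all $n\ge n_0$ already yields $\liminf\|L_n(\vec\theta)\|/\delta_n\ge1$, which is all that membership in $\mathfrak G_2$ requires — and counterproductive, since decreasing $\delta_m$ \emph{strengthens} Condition~\ref{cond1}, whose right-hand side is $2^{2\lambda+1}d/\delta_m$. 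The paper instead simply reindexes the sequence starting from $n_0$, rescales onto the given cube, and applies the first part verbatim.
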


\begin{proof}

First assume that $R_1\ge2^{|\lambda|}d^{1/p}$. Let us prove that
$\mathfrak G_1\cap[0;1]^d\ne\varnothing$.

Introduce some notation. Let $q\in[1;\infty]$ be the H\"older's
conjugate of $p$ (i.~e., $1/p+1/q=1$). Put $l_0=0$ and for
$n\in\NN$ define
$$l_n=\left\lceil\log_2\frac{d^{1/q}R_n}{\delta_n}+\lambda\right\rceil.$$
Notice that the sequence $l_n$ is non-decreasing.

Further, for $n\in\NN_0$ and $\vec c=(c_1,\ldots,c_d)\in\mathcal
C_n=\{0,1,\ldots,2^{l_n}-1\}^d$ put
$$I_n(\vec c)=\left[\frac{c_1}{2^{l_n}};\frac{c_1+1}{2^{l_n}}\right\rangle\times\ldots\times\left[\frac{c_d}{2^{l_n}};\frac{c_d+1}{2^{l_n}}\right\rangle,$$
where the notation $$\left[\frac
c{2^l};\frac{c+1}{2^l}\right\rangle=\begin{cases}\left[\frac
c{2^l};\frac{c+1}{2^l}\right),&c<2^l-1;\\
\left[\frac c{2^l};\frac{c+1}{2^l}\right],&c=2^l-1,\end{cases}$$
is used. Notice that for every $n\in\NN_0$ the cubes $I_n(\vec c)$
($\vec c\in\mathcal C_n$) are pairwise disjoint, and for any
integers $n\ge m\ge0$ every cube of the form $I_m(\vec c)$ can be
represented as a union of cubes of the form $I_n(\vec d)$.

For $n\in\NN$ consider
\begin{equation}\label{3}E_n=\{\vec\theta\in[0;1]^d:\|L_n(\vec\theta)\|<\delta_n\};\end{equation}
$$A_n=\bigsqcup_{\vec c\in\mathfrak C_n}I_n(\vec c),$$
where $\mathfrak C_n$ is the set of those vectors $\vec
c\in\mathcal C_n$, for which $I_n(\vec c)\cap E_n\ne\varnothing$.
Then $E_n\subset A_n$.

Let $\vec\theta\in A_n$. Then there is $\vec c\in\mathfrak C_n$
such that $\vec\theta\in I_n(\vec c)$, and there is $\vec \xi\in
I_n(\vec c)\cap E_n$. Therefore,
%$$\begin{aligned}\|L_n(\vec\theta)\|=\|L_n(\vec\xi)+\vec
%a_n\cdot(\vec\theta-\vec\xi)\|\le\|L_n(\vec\xi)\|+|\vec
%a_n\cdot(\vec\theta-\vec\xi)|<\\<\delta_n+|\vec
%a_n|_p\cdot|\vec\theta-\vec\xi|_q\le\delta_n+R_nd^{1/q}2^{-l_n}\le(1+2^{-\lambda})\delta_n.\end{aligned}$$
$$\|L_n(\vec\theta)\|=\|L_n(\vec\xi)+\vec
a_n\cdot(\vec\theta-\vec\xi)\|\le\|L_n(\vec\xi)\|+|\vec
a_n\cdot(\vec\theta-\vec\xi)|<$$ $$<\delta_n+|\vec
a_n|_p\cdot|\vec\theta-\vec\xi|_q\le\delta_n+R_nd^{1/q}2^{-l_n}\le(1+2^{-\lambda})\delta_n.$$
Thus, all vectors $\vec\theta\in A_n$ satisfy
$\|L_n(\vec\theta)\|<(1+2^{-\lambda})\delta_n$.

Define $B_n$, as in Lemma~\ref{l1}, assuming $\Omega=[0;1]^d$.

Let $n\in\NN$, $m=m(n)$. We check that~(\ref{1}) holds (with
$\PP=\mu$). The set $B_m$ can be represented in the form
$B_m=\bigsqcup\limits_{\vec c\in\mf D_m}I_m(\vec c)$, where $\mf
D_m$ is a subset of $\mathcal C_m$ (possibly, empty). Then
$$A_n\cap B_m=\bigsqcup_{\vec
c\in\mf D_m}(A_n\cap I_m(\vec c)).$$

Since (for any $\vec c\in\mc C_m$)
$$A_n\cap I_m(\vec c)\subset\{\vec\theta\in I_m(\vec c): \|L_n(\vec\theta)\|\le(1+2^{-\lambda})\delta_n\},$$
it follows from Corollary~\ref{c1} of Lemma~\ref{l2} that
$$\frac{\mu(A_n\cap I_m(\vec c))}{\mu (I_m(\vec c))}\le2(1+2^{-\lambda})\delta_n\left(1+\dfrac{d^{1/p}}{R_n2^{-l_m}}\right).$$

If $m=0$ then $\frac{d^{1/p}}{R_n2^{-l_m}}\le
d^{1/p}/R_1\le2^{-\lambda}$, because we assume that
$R_1\ge2^{|\lambda|}d^{1/p}$.

If $m>0$ then
$$\frac{d^{1/p}}{R_n2^{-l_m}}<2^{\lambda+1}\frac{d^{1/p+1/q}R_m}{R_n\delta_m }=\frac{2^{\lambda+1}dR_m}{\delta_mR_n}\le2^{-\lambda}$$
in view of Condition~\ref{cond1} of the proposition.

In any case
$$\frac{\mu(A_n\cap I_m(\vec c))}{\mu (I_m(\vec c))}\le2(1+2^{-\lambda})^2\delta_n\le x_n\prod\limits_{m<k<n}(1-x_k),$$
consequently,
$$\mu(A_n\cap B_m)\le x_n\prod\limits_{m<k<n}(1-x_k)\cdot\sum_{\vec c\in\mf D_m}\mu(I_m(\vec c))= x_n\prod\limits_{m<k<n}(1-x_k)\cdot\mu(B_m).$$

Thus, the inequality~(\ref{1}) holds. Hence, for any $n\in\NN$ one
has $\mu(B_n)\ge\prod\limits_{m=1}^n(1-x_m)>0$; in particular,
$B_n\ne\varnothing$. Denote
\begin{equation}\label{4}F_n=\bigcap\limits_{m=1}^nE_m^c,\end{equation} where $E_n$ are given by~(\ref{3}). Then
for every $n\in\NN$ the relation $F_n\supset B_n$ holds, hence
$F_n\ne\varnothing$. Since all $E_n^c$ are compact, it follows
that $\mf G_1\cap[0;1]^d=\bigcap\limits_{n=1}^\infty
F_n\ne\varnothing$.

If $R_1<2^{|\lambda|} d^{1/p}$ then make the linear change of
variables $\vec\theta=\frac{2^{|\lambda|}
d^{1/p}}{R_1}\cdot\vec\vartheta$. Using the proved one gets $\mf
G_1\ne\varnothing$.

Now we prove the second statement of the proposition. Let
$I=[v_1;v_1+r]\times\ldots\times[v_d;v_d+r]\subset\RR^d$ be any
cube with side $r>0$. Make the linear change of variables
$\vec\theta=\vec v+r\vec\vartheta$, $\vec\vartheta\in[0;1]^d$.

Since $\lim\limits_{n\to\infty}R_n=\infty$, there is $n_0\in\NN$
such that $rR_{n_0}\ge2^{|\lambda|}d^{1/p}$. Consider $\widetilde
L_n(\vec \theta)=L_{n_0-1+n}(r\vec\theta+\vec v)$ instead of
$L_n(\vec\theta)$, $\widetilde\delta_n=\delta_{n_0-1+n}$ instead
of $\delta_n$, $\widetilde x_n=x_{n_0-1+n}$ in place of $x_n$,
$\widetilde m(n)=\max\{m(n_0-1+n)-n_0+1;0\}$ instead of $m(n)$.
One deduces from what was proved that
$$\{\vec\theta\in I : \forall n\ge
n_0\quad\|L_n(\vec\theta)\|\ge\delta_n\}\ne\varnothing.$$ The
second assertion of the proposition follows
immediately.\end{proof}

\begin{prop}\label{pr2}
Let $\lambda\in\RR$, $\eta_\nu\in(0;1)$ ($\nu\in\NN_0$). Let
$\{n_\nu\}_{\nu\in\NN}$ be an increasing sequence of positive
integers. Denote
$$\sigma_\nu=\begin{cases}2(1+2^{-\lambda})\sum_{0<n\le n_1}\delta_n,&\nu=0;\\2(1+2^{-\lambda})^2\sum_{n_\nu<n\le n_{\nu+1}}\delta_n,&\nu\in\NN.\end{cases}$$
Suppose that the following is true:
\begin{enumerate}
    \item For $\nu\in\NN$
    $$\frac{R_{n_{\nu+1}+1}}{R_{n_\nu}}\ge\frac{2^{2\lambda+1}d}{\delta_{n_\nu}}.$$
    \item $$\sigma_0<\eta_0.$$
    \item For $\nu\in\NN$
      $$\sigma_\nu\le\eta_\nu(1-\eta_{\nu-1}).$$
    \item\label{cond3} There are infinitely many  $\nu\in\NN$ such
    that
    $$\left(1-\eta_{\nu}-\frac{\sigma_{\nu+1}}{\eta_{\nu+1}}\right)2^{d\lfloor\log_2Q_\nu\rfloor}\ge1,$$
    where
    $$Q_\nu=\frac{R_{n_{\nu+1}}\delta_{n_\nu}}{R_{n_\nu}\delta_{n_{\nu+1}}}.$$
\end{enumerate}
Then the set $\mathfrak G_1$ is of cardinality continuum. In
addition, the set $\mathfrak G_2$ is everywhere dense (moreover,
for any non-empty open set $\Omega\subset\RR^d$ the intersection
$\mf G_2\cap \Omega$ is of cardinality continuum).
\end{prop}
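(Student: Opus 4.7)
The plan is to imitate the proof of Proposition~\ref{pr1} but aggregated by blocks $J_\nu=(n_\nu,n_{\nu+1}]$: Lemma~\ref{l1} applied at the block level will give non-emptiness of $\mathfrak G_1\cap[0;1]^d$, and condition~4 will then be used to extract a Cantor-style branching sub-tree that upgrades this to cardinality continuum.

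After rescaling so that $R_1\ge 2^{|\lambda|}d^{1/p}$ (as in the first half of the proof of Proposition~\ref{pr1}), I keep the levels $l_n$, dyadic cubes $I_n(\vec c)$, and enlargements $A_n\supset E_n$ unchanged, and introduce aggregated bad sets $\tilde A_0=\bigcup_{k\in J_0}E_k$ and $\tilde A_\nu=\bigcup_{k\in J_\nu}A_k$ for $\nu\ge 1$ (the $\nu=0$ version uses $E_k$ directly in $[0;1]^d$, accounting for the missing $(1+2^{-\lambda})$ factor in the stated $\sigma_0$). Condition~1 in its ``skip-a-block'' form is exactly what makes a level-$l_{n_\nu}$ parent cube $I$ coarse enough for Corollary~\ref{c1} to yield $d^{1/p}/(R_k\cdot 2^{-l_{n_\nu}})\le 2^{-\lambda}$ for every $k\in J_{\nu+1}$; summing gives the local block bound $\mu(\tilde A_{\nu+1}\cap I)/\mu(I)\le\sigma_{\nu+1}$, and the analogous bound on $[0;1]^d$ gives $\mu(\tilde A_0)\le\sigma_0$. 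I then apply Lemma~\ref{l1} to $\{\tilde A_\nu\}$ with $x_\nu=\eta_\nu$ and $m(\nu)=\nu-1$: condition~2 handles the base case and condition~3 verifies~(\ref{1}) via $\sigma_\nu\le\eta_\nu(1-\eta_{\nu-1})\le\eta_\nu$. The lemma outputs $\mu(\tilde B_\nu)\ge\prod_{\mu<\nu}(1-\eta_\mu)\mu(\Omega)>0$, which is the analogue of Proposition~\ref{pr1} at the block level.

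To upgrade to cardinality continuum I construct a nested family of dyadic cubes whose branching levels are the infinitely many $\nu$'s supplied by condition~4. At such a $\nu$, partition every surviving level-$l_{n_\nu}$ cube $I$ into its $M=2^{d(l_{n_{\nu+1}}-l_{n_\nu})}$ level-$l_{n_{\nu+1}}$ sub-cubes and keep those sub-cubes $J$ satisfying both (i)~$J\cap\tilde A_\nu=\varnothing$ and (ii)~$\mu(\tilde A_{\nu+1}\cap J)/\mu(J)\le\eta_{\nu+1}$, so that $J$ will continue to provide Lemma~\ref{l1}-compatible bookkeeping at the following block. A union bound combining $\sigma_\nu\le\eta_\nu$ (from condition~3) with a Markov estimate bounding the number of $J$'s failing (ii) by $\sigma_{\nu+1}M/\eta_{\nu+1}$ gives at least $M(1-\eta_\nu-\sigma_{\nu+1}/\eta_{\nu+1})$ retained sub-cubes per parent, which by condition~4 and $M\ge 2^{d\lfloor\log_2 Q_\nu\rfloor}$ is at least~$1$. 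Between branching levels I propagate a single retained sub-cube per parent so the LLL survival bound is maintained. Because the $R_n$ (and hence the $l_{n_\nu}$) grow without bound at the branching levels, the dyadic diameters shrink to zero, and the resulting nested intersection is a compact subset of $\mathfrak G_1\cap[0;1]^d$ of cardinality continuum.

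The dense-continuum conclusion for $\mathfrak G_2\cap\Omega$ with $\Omega$ any non-empty open set follows by the shift-and-rescale argument from the end of Proposition~\ref{pr1}: restrict to a cube inside $\Omega$, drop finitely many initial indices so that the rescaled tail satisfies $R_1\ge 2^{|\lambda|}d^{1/p}$, and run the Cantor construction above on the tail, which automatically forces $\liminf_n\|L_n(\vec\theta)\|/\delta_n\ge 1$ on the constructed set. The main obstacle is turning the seemingly weak ``$\ge 1$'' in condition~4 into genuine binary branching so that the Cantor tree produces continuum many points rather than a single nested chain. The intended resolution is that $l_{n_{\nu+1}}-l_{n_\nu}$ typically exceeds $\lfloor\log_2 Q_\nu\rfloor$ by~$1$ (depending on the fractional parts of $\log_2(d^{1/q}R_{n_\nu}/\delta_{n_\nu})$), which furnishes an extra factor $2^d\ge 2$ in $M$ and hence at least two retained sub-cubes per parent at each branching level; arranging this to happen at infinitely many branching stages is the technical heart of the argument.
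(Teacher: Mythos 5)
Your overall architecture (aggregate the indices into blocks $(n_\nu;n_{\nu+1}]$, control each block on the dyadic cubes of level $l_{n_\nu}$ via Condition~1 and Corollary~\ref{c1}, then build a Cantor tree of surviving cubes whose branching is driven by Condition~4) is the same as the paper's. But there is a genuine gap exactly at the point you flag as ``the technical heart'': your counting only yields \emph{at least one} retained sub-cube per parent, and your proposed fix --- that $l_{n_{\nu+1}}-l_{n_\nu}$ ``typically'' exceeds $\lfloor\log_2Q_\nu\rfloor$ by $1$, giving an extra factor $2^d$ --- is not proved and is false in general. The proof of Proposition~\ref{pr1} only gives $l_{n_{\nu+1}}-l_{n_\nu}>\log_2 Q_\nu-1$, i.e.\ $l_{n_{\nu+1}}-l_{n_\nu}\ge\lfloor\log_2Q_\nu\rfloor$; if the quantities $\log_2(d^{1/q}R_{n_\nu}/\delta_{n_\nu})+\lambda$ happen to be integers for all $\nu$, the number of sub-cubes is exactly $2^{d\lfloor\log_2Q_\nu\rfloor}$ and your surplus never materializes. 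With only one sub-cube per parent you get a single nested chain, hence only $\mf G_1\ne\varnothing$, not cardinality continuum.

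The paper's mechanism for extracting two sub-cubes is different and you are missing it: one defines a $\nu$-cube $I$ to be \emph{good} by a \emph{strict} density inequality $\mu(B_{n_{\nu+1}}\cap I)>(1-\eta_\nu)\mu(I)$ (rather than your avoidance condition $J\cap\tilde A_\nu=\varnothing$ plus a non-strict Markov bound), and then propagates strictness through the whole count: goodness of the parent gives $a>(1-\eta_{\nu-1})2^{d(l_{n_\nu}-l_{n_{\nu-1}})}$ sub-cubes inside $B_{n_\nu}\cap I$ and, after comparing measures, $g>\bigl(1-\eta_{\nu-1}-\sigma_\nu/\eta_\nu\bigr)2^{d\lfloor\log_2Q_{\nu-1}\rfloor}$ good ones, with strict inequality. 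Since $g$ is an integer, Condition~4 ($\ge1$ on the right) forces $g\ge2$ --- no extra dyadic level is needed. To repair your argument you should replace the retention criteria by density-based ones with strict inequalities and redo the union/Markov bounds so the final count is a strict ``$>$''; as written, the branching step, and hence the continuum conclusion for both $\mf G_1$ and $\mf G_2\cap\Omega$, does not follow. (A secondary, fixable issue: taking $\tilde A_0=\bigcup_{k\le n_1}E_k$ rather than $\bigcup_{k\le n_1}A_k$ breaks the cube-counting at the first step, since $E_k$ need not be a union of level-$l_{n_1}$ cubes; the paper instead keeps the $A_k$ and absorbs the resulting factor $1+d^{1/p}/R_0$ by choosing $R_0$ with $(1+d^{1/p}/R_0)\sigma_0<\eta_0$.)
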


\begin{proof}
Take $R_0\ge2^{|\lambda|}d^{1/p}$ such that
$$(1+d^{1/p}/R_0)\sigma_0<\eta_0.$$
Note that for $\nu\in\NN$
$$(1+d^{1/p}/R_0)\frac{\sigma_\nu}{1+2^{-\lambda}}\le\sigma_\nu<\eta_\nu.$$
Let's prove that if $R_1\ge R_0$ then the set $\mathfrak
G_1\cap[0;1]^d$ is of cardinality continuum.

We preserve all the notation from the proof of
Proposition~\ref{pr1}. In addition, set $n_0=0$.

For $\nu\in\NN_0$ we define a \emph{$\nu$-cube} as a cube of the
form $I_{n_\nu}(\vec c)$, $\vec c\in\mc C_{n_\nu}$. We shall call
a $\nu$-cube $I$ \emph{good} if
$$\mu(B_{n_{\nu+1}}\cap I)>(1-\eta_\nu)\mu(I).$$

Let $\nu\in\NN$, $n_{\nu+1}<n\le n_{\nu+2}$. Then
$$\frac{R_n}{R_{n_\nu}}\ge\frac{R_{n_{\nu+1}+1}}{R_{n_\nu}}\ge\frac{2^{2\lambda+1}d}{\delta_{n_\nu}},$$
and the arguments, similar to those used in the proof of
Proposition~\ref{pr1}, give us that for any  $\nu$-cube $I$,
$$\frac{\mu(A_n\cap I)}{\mu(I)}\le2(1+2^{-\lambda})^2\delta_n.$$
Moreover, for $n\le n_2$,
$$\mu(A_n)\le2(1+2^{-\lambda})\delta_n(1+d^{1/p}/R_0)\le2(1+2^{-\lambda})^2\delta_n.$$
Therefore,
$$\mu(B_{n_1})\ge1-\sum_{n=1}^{n_1}\mu(A_n)\ge1-(1+d^{1/p}/R_0)\sigma_0>1-\eta_0,$$
i.~e., $[0;1]^d$ is a good $0$-cube.

Suppose that $\nu\in\NN$ and $I$ is a good $(\nu-1)$-cube. For
$n_{\nu}<n\le n_{n_{\nu+1}}$,
$$\mu(A_n\cap I)\le2(1+2^{-\lambda})^2\delta_n\mu(I)<\frac{2(1+2^{-\lambda})^2\delta_n}{1-\eta_{\nu-1}}\mu(B_{n_\nu}\cap I),$$
hence
$$\mu(B_{n_{\nu+1}}\cap I)\ge\mu(B_{n_\nu}\cap I)-\sum_{n_\nu<n\le n_{\nu+1}}\mu(A_n\cap I)>\left(1-\frac{\sigma_\nu}{1-\eta_{\nu-1}}\right)\mu(B_{n_\nu}\cap I).$$

Write $B_{n_\nu}\cap I$ in the form
$$B_{n_\nu}\cap I=\bigsqcup_{n=1}^aJ_n,$$
where $J_n$ are $\nu$-cubes. Then
$$a=\frac{\mu(B_{n_\nu}\cap I)}{2^{-dl_{n_\nu}}}>(1-\eta_{\nu-1})2^{d(l_{n_\nu}-l_{n_{\nu-1}})}.$$
Let $g$ denote the number of good $J_n$. Then
$$\left(1-\frac{\sigma_\nu}{1-\eta_{\nu-1}}\right)a2^{-dl_{n_\nu}}=\left(1-\frac{\sigma_\nu}{1-\eta_{\nu-1}}\right)\mu(B_{n_\nu}\cap I)<\mu(B_{n_{\nu+1}}\cap I)=$$
$$=\sum_{n=1}^a\mu(B_{n_{\nu+1}}\cap J_n)\le g2^{-dl_{n_\nu}}+(a-g)(1-\eta_{\nu})2^{-dl_{n_\nu}},$$
consequently,
$$g>\left(1-\frac{\sigma_{\nu}}{\eta_{\nu}(1-\eta_{\nu-1})}\right)a,$$
in particular, $g>0$. Hence, for every $\nu\in\NN_0$ any good
$\nu$-cube contains a good $(\nu+1)$-cube.

Further, if $\nu>1$, then
$$l_{n_\nu}-l_{n_{\nu-1}}>\log_2\frac{d^{1/q}R_{n_\nu}}{\delta_{n_\nu}}+\lambda-\left(\log_2\frac{d^{1/q}R_{n_{\nu-1}}}{\delta_{n_{\nu-1}}}+\lambda+1\right)=\log_2Q_{\nu-1}-1,$$
therefore,
$$g>\left(1-\eta_{\nu-1}-\frac{\sigma_{\nu}}{\eta_{\nu}}\right)2^{d(l_{n_\nu}-l_{n_{\nu-1}})}\ge\left(1-\eta_{\nu-1}-\frac{\sigma_{\nu}}{\eta_{\nu}}\right)2^{d\lfloor\log_2Q_{\nu-1}\rfloor}.$$
It follows now from Condition~\ref{cond3} of the proposition that
there are infinitely many  $\nu\in\NN$ such that every good
$\nu$-cube contains at least two good $(\nu+1)$-cubes. Thus, if we
denote by $G_\nu$ the union of closures of all good $\nu$-cubes,
then the set $G=\bigcap\limits_{\nu=0}^\infty G_\nu$ is of
cardinality continuum. Notice that
$$G_\nu\subset\ol{B_{n_\nu}}\subset\ol{F_{n_\nu}}=F_{n_\nu}$$
($\overline A$ denotes the closure of a set $A$, $F_n$ are given
by~(\ref{4})), therefore
$$G\subset\bigcap_{n=1}^\infty F_n=\mf G_1\cap[0;1]^d,$$
hence in the case $R_1\ge R_0$ the first statement of the
proposition is proved.

The rest of the proof is analogous to the end of the proof of
Proposition~\ref{pr1}. For that one should notice, that for
$\nu\in\NN$
$$\delta_{n_{\nu+1}}\le\frac{\sigma_\nu}{2(1+2^{-\lambda})^2}<\frac{1}{2(1+2^{-\lambda})^2},$$
hence
$$\frac{R_{n_{\nu+2}+1}}{R_{n_{\nu+1}}}\ge\frac{2^{2\lambda+1}d}{\delta_{n_{\nu+1}}}>4(2^\lambda+1)^2d>4,$$
thus $\lim\limits_{n\to\infty}R_n=\infty$.
\end{proof}

\section{Examples}\label{sec3}

\begin{theorem}
Suppose that there is $N\in\NN$ such that for any $n\in\NN$\quad
$R_{n+N}/R_n\ge2$. Denote
$$\delta=\frac1{2eN\Bigl(\log_2(Nd)+4\log_2\bigl(\log_2(Nd)+30\bigr)\Bigr)}.$$
Then the set
$$\{\vec\theta\in\RR^d : \inf_{n\in\NN}\|L_n(\vec\theta)\|\ge\delta\}$$
is non-empty. Moreover, the set
$$\{\vec\theta\in\RR^d : \liminf\limits_{n\to\infty}\|L_n(\vec\theta)\|\ge\delta\}$$
is everywhere dense.
\end{theorem}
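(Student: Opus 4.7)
My plan is to apply Proposition~\ref{pr1} with the constant sequences $\delta_n = \delta$ and $x_n = x$, for an appropriate $x \in (0;1)$ and parameter $\lambda \in \RR$. The growth hypothesis $R_{n+N}/R_n \ge 2$ iterates to give $R_n/R_m \ge 2^t$ whenever $n - m \ge tN$, so I would take $t = \lceil 2\lambda + 1 + \log_2(d/\delta) \rceil$ and $m(n) = \max(n - tN,\, 0)$. Then $R_n/R_m \ge 2^t \ge 2^{2\lambda+1} d/\delta$ whenever $m(n) > 0$, so Condition~\ref{cond1} of Proposition~\ref{pr1} is automatic, and the whole task is reduced to verifying Condition~\ref{cond2}.

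For constant $x_n = x$, Condition~\ref{cond2} says $2(1+2^{-\lambda})^2 \delta \le x(1-x)^{n-m(n)-1}$, and the worst case $n - m(n) = tN$ reduces it to
\[
2(1+2^{-\lambda})^2 \delta \;\le\; x(1-x)^{tN-1}.
\]
Optimizing in $x$ leads to the standard choice $x = 1/(tN)$, and the classical bound $(1 - 1/k)^{k-1} > 1/e$ gives $x(1-x)^{tN-1} > 1/(etN)$. So it suffices to prove
\[
(1+2^{-\lambda})^2 \, t \;\le\; \frac{1}{2eN\delta} \;=\; \log_2(Nd) + 4\log_2\bigl(\log_2(Nd)+30\bigr).
\]
Writing $s = \log_2(Nd)$ and $L$ for the right-hand side, and unfolding the definition of $t$, this becomes a purely numerical inequality in $s$ and $\lambda$, since $t \le 2\lambda + 3 + \log_2 e + s + \log_2 L$.

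My choice of $\lambda$ would be $\lambda = \log_2(s + 30)$. This makes $2^{-\lambda} \le 1/30$, so $(1+2^{-\lambda})^2 \le (31/30)^2$, while simultaneously accounting for $2\log_2(s+30)$ out of the $4\log_2(s+30)$ budget in $L$; the remaining $2\log_2(s+30)$ is what absorbs the contributions from $\log_2 L$, $\log_2 e$, the multiplicative excess $(1+2^{-\lambda})^2 - 1$, and the ceiling rounding in $t$. The main obstacle will be precisely this arithmetic verification step: one must check that $(1+2^{-\lambda})^2 t \le L$ holds uniformly for every $s \ge 0$, including the tight regime $s = 0$ (i.e., $N = d = 1$), where the constant $30$ is calibrated to provide exactly the necessary slack. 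Once both conditions of Proposition~\ref{pr1} are confirmed, the first conclusion of the theorem is immediate from $\mathfrak{G}_1 \ne \varnothing$; the density of $\mathfrak{G}_2$ then follows from the last sentence of Proposition~\ref{pr1}, since the growth hypothesis forces $R_n \to \infty$.
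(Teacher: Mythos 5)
Your overall framework is exactly the paper's: constant $\delta_n=\delta$, constant $x_n=1/(hN)$ with $h=\lceil 2\lambda+1+\log_2(d/\delta)\rceil$ (your $t$; I rename it to avoid clashing with the paper, whose $t$ is your $L$), window $m(n)=\max(0,n-hN)$, the bound $(1-1/k)^{k-1}>1/e$, and the resulting reduction to $(1+2^{-\lambda})^2h\le L$ with $L=1/(2eN\delta)=\log_2(Nd)+4\log_2\bigl(\log_2(Nd)+30\bigr)$. The gap lies precisely in the step you defer: with your choice $\lambda=\log_2(s+30)$, $s=\log_2(Nd)$, that numerical inequality is \emph{false}. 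Concretely, take $N=8$, $d=1$, so $s=3$: then $\lambda=\log_2 33$, $L=3+4\log_2 33\approx 23.178$, the argument of the ceiling is $2\lambda+2+\log_2 e+s+\log_2 L\approx 21.07$, so $h=22$, and $(1+1/33)^2\cdot 22=(34/33)^2\cdot 22\approx 23.35>23.18$. The same failure occurs for $Nd=16$, $Nd=1024$, and other values; moreover, already at $s=0$ your stated bound $h\le 2\lambda+3+\log_2 e+s+\log_2 L\approx 18.55$ gives $(31/30)^2\cdot 18.55\approx 19.81>19.63=L$, so the crude ceiling estimate never closes, and the case $s=0$ is in fact not the tight one.

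The underlying problem is that your $\lambda$ is too large. Increasing $\lambda$ by $1$ adds $2$ to $h$ (costing roughly $2$ in the product $(1+2^{-\lambda})^2h$) while reducing the factor $(1+2^{-\lambda})^2$ by only about $2\cdot 2^{-\lambda}h$, so the optimal balance is $2^{\lambda}\asymp h\asymp L$, whereas you take $2^{\lambda}=s+30$ (e.g. $33$ against $L\approx 23$ at $s=3$). The paper chooses $2^{-\lambda}=1/(L\ln 2)$; then the target becomes $(1+1/(L\ln2))^2h\le L$, which it reduces to $h\le L-2.9$ and verifies via $h<L-4\log_2 u+3\log_2 L+\log_2(8e\ln^2 2)$ with $u=s+30$, the inequality $3\log_2 L\le 4\log_2 u-6.3$ holding for all $s\ge0$ (it is tightest around $s\approx 10$--$30$, with margin of order $0.1$). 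Everything else in your outline --- the verification of Condition~\ref{cond1}, the use of $(1-1/k)^{k-1}>1/e$, and the deduction of both conclusions from Proposition~\ref{pr1} --- is correct, so the argument is repaired simply by replacing your $\lambda$ with the paper's and carrying out that verification.
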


\begin{proof}
Denote
$$u=\log_2(Nd)+30;$$
$$t=\log_2(Nd)+4\log_2u;$$
$$\lambda=\log_2(t\ln2);$$
$$h=\lceil\log_2(2^{2\lambda+1}d/\delta)\rceil;$$
$$x=\frac1{Nh}.$$ Apply Proposition~\ref{pr1}. Take $x_n=x$, $\delta_n=\delta$, $m(n)=\max\{0;n-Nh\}$. Then Condition~\ref{cond1} of Proposition~\ref{pr1} holds.
Since
$$\prod_{m<k<n}(1-x_k)\ge(1-1/(Nh))^{Nh-1}>\frac1e,$$
it is enough to verify that
$$2(1+2^{-\lambda})^2\cdot\delta\le x/e,$$
i.~e., $$\left(1+\frac1{t\ln2}\right)^2h\le t.$$ It is sufficient
to prove that $h\le t-2.9$. One has
$$h<\log_2\frac{2^{2\lambda+2}d}\delta=t-4\log_2u+3\log_2t+\log_2(8e\ln^22)<$$$$<t-4\log_2u+3\log_2(u-30+4\log_2u)+3.4<t-2.9.$$

Now the theorem follows from Proposition~\ref{pr1}.
\end{proof}

\begin{theorem}
Suppose that there is such $N\in\NN$ that for any $n\in\NN$\quad
$R_{n+N}/R_n\ge2$. Denote
$$\delta=\frac1{8N\Bigl(\log_2(Nd)+4\log_2\bigl(\log_2(Nd)+36\bigr)\Bigr)}.$$Then the set
$$\{\vec\theta\in\RR^d : \inf_{n\in\NN}\|L_n(\vec\theta)\|\ge\delta\}$$
is of cardinality continuum.
\end{theorem}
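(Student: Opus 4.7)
The plan is to apply Proposition~\ref{pr2}, in place of Proposition~\ref{pr1}, following the same template as the previous theorem but exploiting the stronger conclusion (cardinality continuum instead of non-emptiness) that Proposition~\ref{pr2} provides.

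Concretely, I would set $u=\log_2(Nd)+36$, $t=\log_2(Nd)+4\log_2u$ (so that $\delta=1/(8Nt)$), $\lambda=\log_2(t\ln2)$, $h=\lceil\log_2(2^{2\lambda+1}d/\delta)\rceil$, and then choose the constant sequences $\delta_n=\delta$, $\eta_\nu=1/2$ together with the arithmetic progression $n_\nu=\nu Nh$. Iterating the hypothesis $R_{n+N}/R_n\ge2$ exactly $h$ times gives $R_{n_\nu+Nh}/R_{n_\nu}\ge2^h\ge2^{2\lambda+1}d/\delta$, which verifies Condition~1 of Proposition~\ref{pr2} and simultaneously shows $Q_\nu\ge2^h$ for every $\nu$, so that $\lfloor\log_2Q_\nu\rfloor\ge h$.

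The quantities $\sigma_0=2(1+2^{-\lambda})Nh\delta$ and $\sigma_\nu=2(1+2^{-\lambda})^2Nh\delta$ (for $\nu\ge1$) are independent of the index, so Conditions~2 and~3 of Proposition~\ref{pr2} (with $\eta_\nu=1/2$) both amount to $\sigma_\nu\le1/4$, while Condition~4 reduces to the slightly stronger inequality
$$(1+2^{-\lambda})^2Nh\delta\le 1/8-2^{-dh-2}.$$
Verifying this is the main obstacle. Substituting $\delta=1/(8Nt)$ transforms it into $(1+2^{-\lambda})^2h\le t\bigl(1-2^{-dh+1}\bigr)$, and one estimates
$$h<\log_2\frac{2^{2\lambda+2}d}{\delta}=t-4\log_2u+3\log_2t+\log_2(32\ln^22).$$
The enlarged constant $36$ in the definition of $u$ (in place of the $30$ used in the previous theorem) is chosen precisely to absorb both the slightly larger overhead $\log_2(32\ln^22)$ and the essentially negligible tail $t\cdot2^{-dh+1}$; the remaining arithmetic parallels the corresponding step in the previous theorem's proof. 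Once all four conditions of Proposition~\ref{pr2} are checked, its first assertion directly yields that $\mf G_1$ is of cardinality continuum, which is exactly the statement of the theorem.
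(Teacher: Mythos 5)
Your proposal is correct and follows essentially the same route as the paper: Proposition~\ref{pr2} with the same $u,t,\lambda,h$, constant $\delta_n=\delta$, and $n_\nu=\nu Nh$. The only deviation is the choice $\eta_\nu=1/2$ where the paper takes $\eta_\nu=\frac{1+2^{-\lambda}}2\sqrt{h/t}$ (which makes Conditions~2--3 automatic and pushes all the numerics into Condition~\ref{cond3}); your variant closes as well, since the paper's estimate $h<t-2.94$ gives $(1+2^{-\lambda})^2h<t-0.05$, comfortably beating the correction term $t\cdot2^{-dh+1}\le 2t/(16\ln^22\cdot Ndt^3)$.
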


\begin{proof}
Denote
$$u=\log_2(Nd)+36;$$
$$t=\log_2(Nd)+4\log_2u;$$
$$\lambda=\log_2(t\ln2);$$
$$h=\lceil\log_2(2^{2\lambda+1}d/\delta)\rceil;$$
$$\eta=\frac{1+2^{-\lambda}}2\sqrt{\frac ht}.$$
One has
$$h<\log_2\frac{2^{2\lambda+2}d}\delta=t-4\log_2u+3\log_2t+\log_2(32\ln^22)<$$$$<t-4\log_2u+3\log_2(u-36+4\log_2u)+3.95<t-2.94;$$
$$2\eta<\left(1+\frac1{t\ln2}\right)\sqrt{1-\frac{2.94}t}<(1+1.45/t)(1-1.47/t)<1-0.02/t.$$

Apply Proposition~\ref{pr2}. Take $n_\nu=Nh\nu$,
$\delta_n=\delta$, $\eta_\nu=\eta$. Then
$$\sigma_0=\frac{\eta^2}{1+2^{-\lambda}};$$
$$\sigma_\nu=\eta^2.$$
It is clear that Conditions~1-3 of Proposition~\ref{pr2} hold.
Since for $\nu\in\NN$\quad $Q_\nu\ge2^h$, then
$$2^{d\lfloor\log_2 Q_\nu\rfloor}\ge2^h\ge\frac{2^{2\lambda+1}d}{\delta}=16\ln^22\cdot Ndt^3>100t.$$
Thus it is not difficult to see that Condition~\ref{cond3} is also
valid.

Proposition~\ref{pr2} now implies the theorem.
\end{proof}

\begin{theorem}\label{th3}
Let $f,h\colon[1;\infty)\to(0;\infty)$ be non-decreasing
functions, $h(x)\ge x$. Assume that
\begin{equation}\label{5}\lim_{x\to\infty}f(x)=\infty;\end{equation}
$$\sup_{x\ge1}\int\limits_x^{h(x)}\frac{du}{f(u)}<\infty;$$
\begin{equation}\label{6}\liminf_{n\to\infty}\frac{R_{\lfloor h(n)\rfloor}}{nf(n)R_n}>0.\end{equation}
Then the set
$$\{\vec\theta\in\RR^d : \inf_{n\in\NN}\bigl(\|L_n(\vec\theta)\|\cdot f(n)\bigr)>0\}$$
is of cardinality continuum. In addition, the set
$$\{\vec\theta\in\RR^d : \liminf\limits_{n\to\infty}(\|L_n(\vec\theta)\|\cdot f(n))>0\}$$
is everywhere dense.
\end{theorem}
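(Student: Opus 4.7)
The plan is to apply Proposition~\ref{pr2} with $\delta_n=c/f(n)$ for a sufficiently small constant $c>0$ (monotonicity and positivity of $\delta_n$ follow from $f$ being positive and non-decreasing), with $\lambda=0$, constant $\eta_\nu=\eta\in(0,1/2)$, and with a block sequence defined by $n_{\nu+1}=\lfloor h(n_\nu)\rfloor$ starting from some large $n_1$ to be fixed later. The conclusions of the proposition translate directly into the two statements of the theorem: $\|L_n(\vec\theta)\|\ge\delta_n=c/f(n)$ gives $\inf_n\|L_n(\vec\theta)\|\cdot f(n)\ge c>0$, and similarly for the $\liminf$ version.

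The first step is to verify Condition~1. By~(\ref{6}) there exist $c'>0$ and $N_0$ such that $R_{\lfloor h(n)\rfloor}/R_n\ge c'\,n\,f(n)$ for $n\ge N_0$. Since $R_{n_{\nu+1}+1}\ge R_{\lfloor h(n_\nu)\rfloor}$, the requirement $R_{n_{\nu+1}+1}/R_{n_\nu}\ge 2d/\delta_{n_\nu}=2df(n_\nu)/c$ reduces to $n_\nu\ge 2d/(cc')$. This also forces $n_{\nu+1}>n_\nu$ for large $n_\nu$, so $\{n_\nu\}$ is well-defined and strictly increasing.

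The second step is to bound $\sigma_\nu$. Using that $1/f$ is non-increasing, the usual sum-integral comparison yields, for $\nu\ge 1$,
$$\sum_{n_\nu<n\le n_{\nu+1}}\frac{1}{f(n)}\le\int_{n_\nu}^{n_{\nu+1}}\frac{du}{f(u)}\le\int_{n_\nu}^{h(n_\nu)}\frac{du}{f(u)}\le C,$$
where $C:=\sup_{x\ge1}\int_x^{h(x)}du/f(u)<\infty$. Thus $\sigma_\nu\le 8cC$ for $\nu\ge 1$, while $\sigma_0=4c\sum_{n=1}^{n_1}1/f(n)$. By~(\ref{5}) the Ces\`aro-type estimate $\sum_{n=1}^M 1/f(n)=o(M)$ holds. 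Setting $c:=2d/(c'n_1)$ and taking $n_1$ large enough, one can arrange simultaneously Condition~1, $\sigma_0<\eta$ (Condition~2), and even the strict form $\sigma_\nu\le\eta(1-\eta)/2$ for $\nu\ge 1$ (which implies Condition~3).

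Finally, Condition~4 is essentially free: a direct computation gives
$$Q_\nu=\frac{R_{\lfloor h(n_\nu)\rfloor}}{R_{n_\nu}}\cdot\frac{f(n_{\nu+1})}{f(n_\nu)}\ge c'\,n_\nu\,f(n_\nu),$$
and since $n_\nu\to\infty$ and $f(n_\nu)\to\infty$ by~(\ref{5}), one has $Q_\nu\to\infty$; the strengthened Condition~3 gives $1-\eta-\sigma_{\nu+1}/\eta\ge\eta(1-\eta)/2>0$, so Condition~4 holds for all sufficiently large $\nu$. Proposition~\ref{pr2} now yields the theorem. I expect the main obstacle to be the simultaneous parameter calibration in the third paragraph above: Condition~1 pushes $n_1$ large for fixed $c$ while Condition~2 pushes $n_1$ small, and the Ces\`aro fact $\sum_{n\le N}1/f(n)=o(N)$ (a consequence of $f\to\infty$) is what reconciles these opposing demands via the scaling $c\asymp 1/n_1$.
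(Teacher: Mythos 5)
Your proposal is correct and follows essentially the same route as the paper: both apply Proposition~\ref{pr2} with $\lambda=0$, constant $\eta_\nu$, blocks $n_{\nu+1}=\lfloor h(n_\nu)\rfloor$, and $\delta_n\asymp1/f(n)$ with the normalizing constant calibrated to be of order $1/n_1$. The only cosmetic difference is that the paper flattens $\delta_n$ to a constant on the initial segment $n\le n_1$ so that $\sigma_0$ is trivially small, whereas you keep $\delta_n=c/f(n)$ there and control $\sigma_0$ via the Ces\`aro estimate $\sum_{n\le N}1/f(n)=o(N)$; both calibrations work.
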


\begin{proof}
Apply Proposition~\ref{pr2}. Take $\lambda=0$, $\eta_\nu=1/2$.
Take $n_1\in\NN$ large enough and define $n_{\nu+1}=\lfloor
h(n_\nu)\rfloor$, $\nu\in\NN$. Denote
$$C=\sup_{x\ge1}\int\limits_x^{h(x)}\frac{du}{f(u)};$$
$$A=A(n_1)=\max\{40Cf(n_1)/n_1;9\}.$$
Note that~(\ref{5}) implies
\begin{equation}\label{7}
A(n_1)=o(f(n_1))\text{\quad as }n_1\to\infty.
\end{equation}

Put
$$\delta_n=\begin{cases}\frac1{An_1},&n\le n_1;\\\frac{f(n_1)}{An_1f(n)},&n>n_1.\end{cases}$$
Then
$$\sigma_0=\frac4A<\frac12;$$
$$\sigma_\nu=\frac{8f(n_1)}{An_1}\sum_{n_\nu<n\le n_{\nu+1}}\frac1{f(n)}\le\frac{8f(n_1)}{An_1}\int\limits_{n_\nu}^{n_{\nu+1}}\frac{du}{f(u)}\le\frac{8Cf(n_1)}{An_1}\le\frac15\quad(\nu\in\NN).$$

By~(\ref{6}) there is a constant $\gamma>0$ such that for all
sufficiently large $n$,
$$\frac{R_{\lfloor h(n)\rfloor}}{R_n}\ge\gamma nf(n).$$
Hence, if $n_1$ is sufficiently large then, in view of~(\ref{7}),
one deduces that  for $\nu\in\NN$
$$\frac{R_{n_{\nu+1}+1}}{R_{n_\nu}}\ge\gamma n_\nu f(n_\nu)\ge\frac{2Ad}{f(n_1)}n_1f(n_\nu)=\frac{2d}{\delta_{n_\nu}}.$$
As long as
$$Q_\nu\ge\frac{R_{n_{\nu+1}}}{R_{n_{\nu}}}\to\infty,\quad\nu\to\infty,$$
all conditions of Proposition~\ref{pr2} hold.
\end{proof}

\begin{cor}
Suppose that
$$\liminf_{n\to\infty}\left(\frac{R_{n+1}}{R_n}-1\right)n^\beta>0,$$
where $\beta\in(0;1)$. Then the set
$$\{\vec\theta\in\RR^d : \inf_{n\in\NN}\left(\|L_n(\vec\theta)\|\cdot n^\beta\ln( n+1)\right)>0\}$$
is of cardinality continuum. In addition, the set
$$\{\vec\theta\in\RR^d : \liminf\limits_{n\to\infty}\left(\|L_n(\vec\theta)\|\cdot n^\beta\ln n\right)>0\}$$
is everywhere dense.
\end{cor}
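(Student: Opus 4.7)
The plan is to apply Theorem~\ref{th3} with $f(x)=x^\beta\ln(x+1)$. With this choice, $f(n)=n^\beta\ln(n+1)$ and the first conclusion of Theorem~\ref{th3} is precisely the first assertion of the corollary; since $\ln(n+1)/\ln n\to1$, the set defined by $\liminf\|L_n(\vec\theta)\|\cdot n^\beta\ln n>0$ coincides with the set defined by the same condition with $\ln(n+1)$ in place of $\ln n$, so the density statement also follows directly. It therefore remains to exhibit a suitable non-decreasing $h\colon[1;\infty)\to(0;\infty)$ with $h(x)\ge x$ and to verify the three hypotheses of Theorem~\ref{th3}.

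A natural choice is
$$h(x)=\bigl(x^{1-\beta}+C\ln(x+1)\bigr)^{1/(1-\beta)},$$
where $C>0$ is a constant to be fixed later; clearly $h$ is non-decreasing and $h(x)\ge x$. Condition~(\ref{5}) is trivial for our $f$. For the integral hypothesis, $\ln(u+1)\ge\ln(x+1)$ on $[x,h(x)]$, hence
$$\int_x^{h(x)}\frac{du}{u^\beta\ln(u+1)}\le\frac{h(x)^{1-\beta}-x^{1-\beta}}{(1-\beta)\ln(x+1)}=\frac{C}{1-\beta},$$
uniformly in $x\ge1$.

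The substantive step is verifying~(\ref{6}). The hypothesis of the corollary provides $c>0$ such that $R_{n+1}/R_n\ge1+c/n^\beta$ for all sufficiently large $n$. Applying $\log(1+y)\ge y-y^2/2$ to each factor and telescoping,
$$\log\frac{R_{\lfloor h(n)\rfloor}}{R_n}\ge c\sum_{k=n}^{\lfloor h(n)\rfloor-1}k^{-\beta}-\frac{c^2}{2}\sum_{k=n}^{\lfloor h(n)\rfloor-1}k^{-2\beta}.$$
From $\lfloor h(n)\rfloor-n\sim Cn^\beta\ln n/(1-\beta)$ one checks by a straightforward Riemann-sum computation that the first sum is asymptotic to $C\ln(n+1)/(1-\beta)$, while the second is $O(n^{-\beta}\ln n)=o(1)$ (the factor $k^{-2\beta}$ is nearly constant on the short interval $[n,h(n)]$). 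Therefore $R_{\lfloor h(n)\rfloor}/R_n\gg n^{cC/(1-\beta)}$, and fixing any $C>(1-\beta^2)/c$ forces this exponent above $1+\beta$, so that $R_{\lfloor h(n)\rfloor}/(nf(n)R_n)$ is bounded away from zero. This telescoped logarithm estimate is the only step requiring real care; with it in hand Theorem~\ref{th3} immediately yields both assertions of the corollary.
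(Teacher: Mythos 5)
Your proposal is correct and follows essentially the same route as the paper: apply Theorem~\ref{th3} with $f(x)=x^\beta\ln(x+1)$ and an $h$ satisfying $h(x)-x\asymp x^\beta\ln x$, then verify~(\ref{6}) by telescoping the logarithm of $R_{\lfloor h(n)\rfloor}/R_n$. The only difference is cosmetic — the paper takes $h(x)=x+cx^\beta\ln(x+1)$ with $c=2/\gamma$ (making the length of $[x,h(x)]$ explicit), whereas you take $h(x)=\bigl(x^{1-\beta}+C\ln(x+1)\bigr)^{1/(1-\beta)}$ (making the integral bound exact) — and both choices work.
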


\begin{proof}
Let
$$\gamma=\min\left\{1;\liminf_{n\to\infty}\left(\frac{R_{n+1}}{R_n}-1\right)n^\beta\right\}.$$
Take $f(x)=x^\beta\ln(x+1)$ and $h(x)=x+cx^\beta\ln (x+1)$,
$c=2/\gamma$. Then
$$\int\limits_x^{h(x)}\frac{du}{f(u)}\le\frac{h(x)-x}{f(x)}=O(1);$$
$$\ln\frac{R_{\lfloor h(n)\rfloor}}{R_n}\ge\sum_{k=n}^{\lfloor h(n)\rfloor-1}\ln\left(1+\frac{\gamma+o(1)}{k^\beta}\right)=\frac{\gamma+o(1)}{n^\beta}\cdot(h(n)-n+O(1))=(2+o(1))\ln n,\quad n\to\infty,$$
hence $$\lim_{n\to\infty}\frac{R_{\lceil
h(n)\rceil}}{nf(n)R_n}=\infty.$$ It remains to apply
Theorem~\ref{th3} .
\end{proof}

\begin{cor}
Assume that
$$\liminf_{n\to\infty}\left(\frac{R_{n+1}}{R_n}-1\right)n>0.$$
Then the set
$$\{\vec\theta\in\RR^d : \inf_{n\in\NN}\bigl(\|L_n(\vec\theta)\|\cdot n\ln( n+1)\bigr)>0\}$$
is of cardinality continuum. In addition, the set
$$\{\vec\theta\in\RR^d : \varliminf\limits_{n\to\infty}\bigl(\|L_n(\vec\theta)\|\cdot n\ln n\bigr)>0\}$$
is everywhere dense.
\end{cor}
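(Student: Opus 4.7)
The plan is to apply Theorem~\ref{th3} with $f(x)=x\ln(x+1)$ and $h(x)=x^{1+c}$, where $\gamma=\min\{1,\liminf_{n\to\infty}(R_{n+1}/R_n-1)n\}>0$ and $c=3/\gamma$. Both functions are non-decreasing on $[1,\infty)$, with $h(x)\ge x$ and $\lim_{x\to\infty}f(x)=\infty$, so the basic hypotheses of Theorem~\ref{th3} are satisfied.

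First I would verify the integral condition. Using the antiderivative $\ln\ln u$ of $1/(u\ln u)$, for $x\ge 2$ one has
\begin{equation*}
\int_x^{h(x)}\frac{du}{u\ln(u+1)}\le\int_x^{x^{1+c}}\frac{du}{u\ln u}=\ln\ln(x^{1+c})-\ln\ln x=\ln(1+c),
\end{equation*}
while on $[1,2]$ the integrand is bounded and the integral is finite. Hence $\sup_{x\ge 1}\int_x^{h(x)}du/f(u)<\infty$.

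Next I would estimate $R_{\lfloor h(n)\rfloor}/R_n$. The hypothesis gives $R_{k+1}/R_k\ge 1+(\gamma+o(1))/k$ as $k\to\infty$, so by an argument analogous to the preceding corollary,
\begin{equation*}
\ln\frac{R_{\lfloor h(n)\rfloor}}{R_n}\ge\sum_{k=n}^{\lfloor h(n)\rfloor-1}\ln\!\left(1+\frac{\gamma+o(1)}{k}\right)=(\gamma+o(1))\ln\frac{h(n)}{n}+O(1)=(c\gamma+o(1))\ln n.
\end{equation*}
With $c\gamma=3$ this yields $R_{\lfloor h(n)\rfloor}/R_n\ge n^{3-o(1)}$, while $nf(n)\sim n^2\ln n$; thus the ratio in~(\ref{6}) tends to $\infty$. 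All hypotheses of Theorem~\ref{th3} hold, and its conclusion with $f(n)=n\ln(n+1)$ gives the first assertion. For the second, one observes that $n\ln(n+1)\sim n\ln n$, so the liminf conditions involving the two logarithms are equivalent.

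The main obstacle — and the only real difference from the $\beta<1$ case — is that the linear choice $h(x)=x+cx\ln(x+1)$ used before breaks down at $\beta=1$: summation of $\ln(1+\gamma/k)$ over $k\in[n,h(n))$ would yield only $\ln\ln n$ instead of the required $2\ln n$. The remedy is the polynomial blow-up $h(x)=x^{1+c}$, which gives $\ln(h(n)/n)\asymp\ln n$; compatibility with the integral bound is secured precisely by the antiderivative $\ln\ln u$ of $1/(u\ln u)$, which grows slowly enough to absorb a polynomial enlargement of the interval of integration.
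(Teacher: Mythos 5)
Your proof is correct and follows essentially the same route as the paper: the paper also takes $f(x)=x\ln(x+1)$ and $h(x)=x^{C}$ with $C=3/\gamma+1$ (your $1+c$), verifies the integral condition and the lower bound $\ln(R_{\lfloor h(n)\rfloor}/R_n)\ge(3+o(1))\ln n$, and then applies Theorem~\ref{th3}.
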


\begin{proof}
The proof is similar. Take $f(x)=x\ln(x+1)$ and $h(x)=x^C$,
$C=3/\gamma+1$, where
$$\gamma=\min\left\{1;\liminf_{n\to\infty}\left(\frac{R_{n+1}}{R_n}-1\right)n\right\}.$$
Then
$$\int\limits_x^{h(x)}\frac{du}{f(u)}=O(1);$$
$$\ln\frac{R_{\lfloor h(n)\rfloor}}{R_n}\ge\sum_{k=n}^{\lfloor
h(n)\rfloor-1}\frac{\gamma+o(1)}{k}=(1+o(1))\gamma(C-1)\ln
n=(3+o(1))\ln n,\quad  n\to\infty.$$
\end{proof}

\begin{cor}
Assume that
$$\ln R_n=\gamma n^\beta+O(n^{\beta_1})\text{\quad as\ \ }n\to\infty,$$
where $\gamma>0$, $0\le\beta_1<\beta\le1$ are some constants.
Define
$$\alpha(x)=\begin{cases}1,&\beta_1>0;\\
\ln(x+1),&\beta_1=0.\end{cases}$$ Then the set
$$\{\vec\theta\in\RR^d : \inf_{n\in\NN}\left(\|L_n(\vec\theta)\|\cdot n^{1-\beta+\beta_1}\alpha(n)\right)>0\}$$
is of cardinality continuum. Moreover, the set
$$\{\vec\theta\in\RR^d : \varliminf\limits_{n\to\infty}\left(\|L_n(\vec\theta)\|\cdot n^{1-\beta+\beta_1}\alpha(n)\right)>0\}$$
is everywhere dense.
\end{cor}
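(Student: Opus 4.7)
My plan is to invoke Theorem~\ref{th3} with
$$f(x)=x^{1-\beta+\beta_1}\alpha(x),\qquad h(x)=x+Cx^{1-\beta+\beta_1}\alpha(x),$$
for a sufficiently large constant $C>0$. Both functions are non-decreasing; $f(x)\to\infty$ (in the degenerate case $\beta=1$, $\beta_1=0$ the power exponent vanishes but $f(x)=\ln(x+1)$ still tends to infinity); and clearly $h(x)\ge x$. Monotonicity of $f$ gives the integral bound for free:
$$\int_x^{h(x)}\frac{du}{f(u)}\le\frac{h(x)-x}{f(x)}=C,$$
so only condition~(\ref{6}) requires real work.

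For~(\ref{6}) I would argue as follows. Since $\beta_1<\beta\le1$, one has $1-\beta+\beta_1<1$, so $h(n)/n\to1$; in particular $h(n)\le2n$ for large $n$ and $h(n)^{\beta_1}=O(n^{\beta_1})$. Using $\ln R_n=\gamma n^\beta+O(n^{\beta_1})$,
$$\ln\frac{R_{\lfloor h(n)\rfloor}}{R_n}=\gamma\bigl(\lfloor h(n)\rfloor^\beta-n^\beta\bigr)+O(n^{\beta_1}).$$
A Taylor expansion of $(1+t)^\beta$ at $t=(h(n)-n)/n=Cn^{-\beta+\beta_1}\alpha(n)\to0$ (which is trivial when $\beta=1$, giving an exact identity) yields uniformly
$$\lfloor h(n)\rfloor^\beta-n^\beta\ge\beta Cn^{\beta_1}\alpha(n)(1+o(1)),$$
so
$$\ln\frac{R_{\lfloor h(n)\rfloor}}{R_n}\ge\gamma\beta Cn^{\beta_1}\alpha(n)(1+o(1))-O(n^{\beta_1}).$$

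It remains to compare this with $\ln(nf(n))=(2-\beta+\beta_1)\ln n+\ln\alpha(n)$, splitting on $\beta_1$. If $\beta_1>0$, then $\alpha\equiv1$ and the dominant $n^{\beta_1}$ term crushes $\ln n$ as soon as $C$ is chosen large enough to absorb the $O(n^{\beta_1})$ error, yielding $R_{\lfloor h(n)\rfloor}/(nf(n)R_n)\to\infty$. If $\beta_1=0$, then $\alpha(n)=\ln(n+1)$ and the leading growth on both sides is logarithmic; comparing the coefficients $\gamma\beta C$ on the left and $2-\beta$ on the right, any $C>(2-\beta)/(\gamma\beta)$ forces a positive (in fact infinite) liminf. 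Condition~(\ref{6}) thus holds and Theorem~\ref{th3} delivers both conclusions.

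The main obstacle is the book-keeping in verifying~(\ref{6}): one must handle the split $\beta<1$ vs.\ $\beta=1$ when expanding $h(n)^\beta-n^\beta$ (Taylor versus direct), and separately the split $\beta_1>0$ vs.\ $\beta_1=0$ when comparing orders of growth, since only in the latter case does the factor $\alpha$ actually carry weight. Once these two small case analyses are laid out cleanly, the monotonicity, the integral bound, and the passage from Theorem~\ref{th3} to the stated conclusion are all immediate.
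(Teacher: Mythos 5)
Your proposal is correct and follows essentially the same route as the paper: the same choice $f(x)=x^{1-\beta+\beta_1}\alpha(x)$, $h(x)=x+Cf(x)$ with $C$ large, the integral bound by monotonicity, and the expansion of $(1+t)^\beta$ to verify condition~(\ref{6}). The only difference is cosmetic bookkeeping — the paper picks an explicit $C$ and unifies your two cases via the single inequality $n^{\beta_1}\alpha(n)>\ln n$, whereas you split on $\beta_1>0$ versus $\beta_1=0$.
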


\begin{proof} Let for $n\in\NN$
$$|\ln R_n-\gamma n^\beta|\le An^{\beta_1}.$$
Take $f(x)=x^{1-\beta+\beta_1}\alpha(x)$ and $h(x)=x+(C+1)f(x)$,
$C=\frac2{\beta\gamma}(3A+2)$. Then for all sufficiently large
$n$,
$$\ln\frac{R_{\lfloor h(n)\rfloor}}{R_n}>\gamma n^\beta\left((1+Cf(n)/n)^\beta-1\right)-3An^{\beta_1}>\left(\frac{\beta\gamma C}2\alpha(n)-3A\right)n^{\beta_1}\ge2n^{\beta_1}\alpha(n)>2\ln n.$$
\end{proof}

\end{document}